\documentclass[11pt,a4paper,usenames,dvipsnames]{article}

\usepackage{url,amsmath,amssymb,latexsym,pstricks,mathrsfs,comment,amsthm,graphicx,tikz,tikz-cd,enumerate,accents,pgffor,cite,wrapfig,multicol,float,cases,calc,geometry}
\usepackage[colorlinks]{hyperref}

\usepackage[T1]{fontenc}
\usepackage{ wasysym }

\usepackage{xcolor}

\usepackage{enumitem}

\geometry{margin=25mm} 

\usetikzlibrary{calc}

\hyphenation{mon-oid mon-oids}

\newcommand{\nc}{\newcommand}
\nc{\rnc}{\renewcommand}

\nc\restr{{\restriction}}
\nc\Reg{\operatorname{Reg}}
\nc\rank{\operatorname{rank}}
\nc\M{\mathcal M}
\nc\sub\subseteq

\nc\ben{\begin{enumerate}[label=\textup{(\roman*)},leftmargin=7mm]}
\nc\BEN{\begin{enumerate}[label=\textup{(\Roman*)},leftmargin=7mm]}
\nc\een{\end{enumerate}}

\nc\AND{\qquad\text{and}\qquad}
\nc{\leqR}{\leq_{\R}}
\nc{\leqL}{\leq_{\L}}
\nc{\leqJ}{\leq_{\J}}
\nc\T{\mathcal T}

\nc\Z{\mathbb Z}
\nc\N{\mathbb N}
\nc\im{\operatorname{im}}
\nc\RR{\mathbb R}

\nc{\leqRd}{\leq_{\Rd}}
\nc{\leqLd}{\leq_{\Ld}}
\nc{\leqJd}{\leq_{\Jd}}

\rnc\iff{\ \Leftrightarrow\ }
\rnc\implies{\ \Rightarrow\ }

\renewcommand{\H}{\mathrel{\mathcal H}}
\renewcommand{\L}{\mathrel{\mathcal L}}
\newcommand{\R}{\mathrel{\mathcal R}}
\newcommand{\D}{\mathrel{\mathcal D}}
\newcommand{\J}{\mathrel{\mathcal J}}
\newcommand{\G}{\mathrel{\mathcal G}}

\newcommand{\Ld}{\mathrel{\mathcal L'}}
\newcommand{\Rd}{\mathrel{\mathcal R'}}
\newcommand{\Dd}{\mathrel{\mathcal D'}}
\newcommand{\Jd}{\mathrel{\mathcal J'}}
\newcommand{\Gd}{\mathrel{\mathcal G'}}

\newtheorem{thm}{Theorem}
\newtheorem{lemma}[thm]{Lemma}
\newtheorem{cor}[thm]{Corollary}
\newtheorem{prop}[thm]{Proposition}

\theoremstyle{definition}

\newtheorem{rem}[thm]{Remark}

\newtheorem{eg}[thm]{Example}
\newtheorem{egs}[thm]{Examples}

\nc\pf{\begin{proof}}
\nc\epf{\end{proof}}
\nc\epfres{\hfill\qed}
\let\oldproofname=\proofname
\renewcommand{\proofname}{\rm\bf{\oldproofname}}

\begin{document}

\title{Green's relations and stability for subsemigroups}

\author{James East\footnote{Centre for Research in Mathematics, School of Computing, Engineering and Mathematics, Western Sydney University, Locked Bag 1797, Penrith NSW 2751, Australia. {\it Email:} {\tt j.east\,@\,westernsydney.edu.au}} \ and Peter M.~Higgins\footnote{Department of Mathematical Sciences, University of Essex, Colchester CO4 3SQ, UK. {\it Email:} {\tt peteh\,@\,essex.ac.uk}}}

\date{}

\maketitle

\begin{abstract}
We prove new results on inheritance of Green's relations by subsemigroups in the presence of stability of elements.  We provide counterexamples in other cases to show in particular that not all right-stable semigroups are embeddable in left-stable semigroups. This is carried out in the context of a survey of the various closely related notions of stability and minimality of Green's classes that have appeared in the literature over the last sixty years, and which have sometimes been presented in different forms.
\end{abstract}

We take as our starting point two well-known theorems from classical semigroup theory.  Here is the first, which was proved by Green in \cite[Theorem 3]{Green1951}:

\begin{thm}\label{thm:DJ}
In any finite semigroup, we have ${\D}={\J}$.
\end{thm}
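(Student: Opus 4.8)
The plan is to prove the two inclusions $\D\sub\J$ and $\J\sub\D$ separately; only the second uses finiteness. The inclusion $\D\sub\J$ holds in every semigroup: one has $\R\sub\J$ and $\L\sub\J$, the relation $\J$ is an equivalence, and $\D$ is the least equivalence containing both $\R$ and $\L$ (indeed $\D={\R}\circ{\L}={\L}\circ{\R}$), so $\D\sub\J$.

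For $\J\sub\D$ I would first isolate the following \emph{stability} lemma, which is the heart of the matter: if $S$ is finite and $a,b\in S$ satisfy $a\leqR b$ and $a\J b$, then in fact $a\R b$; dually, $a\leqL b$ together with $a\J b$ forces $a\L b$. To prove it, write $a=bs$ with $s\in S^1$, and use $b\leqJ a$ to write $b=uav$ with $u,v\in S^1$; substituting the first relation into the second gives $b=ub(sv)$, and iterating gives $b=u^nb(sv)^n$ for every $n\geq1$. Now choose $n\geq1$ with $u^n$ idempotent -- possible because the cyclic subsemigroup of $S^1$ generated by $u$ is finite (and trivial if $u=1$). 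Multiplying $b=u^nb(sv)^n$ on the left by $u^n$ and using $u^{2n}=u^n$ yields $u^nb=b$; feeding this back in gives $b=b(sv)^n=(bs)\big(v(sv)^{n-1}\big)=a\big(v(sv)^{n-1}\big)\in aS^1$, so $b\leqR a$, and hence $a\R b$.

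With the lemma in hand the argument closes quickly. Suppose $a\J b$, and write $b=uav$ with $u,v\in S^1$. Put $c=av$, so that $c\leqR a$, and $b=uc$, so that $b\leqL c$. The inequalities $c\leqR a$, $a\leqJ b$ and $b\leqL c$ yield the $\leqJ$-cycle $c\leqJ a\leqJ b\leqJ c$, so $a$, $b$ and $c$ are pairwise $\J$-related. Applying the lemma to the pair $c\leqR a$, $c\J a$ gives $a\R c$; applying its dual to $b\leqL c$, $b\J c$ gives $c\L b$. Hence $a\R c\L b$, that is, $a\D b$, as required.

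The only genuine obstacle is the stability lemma, and within it the single non-routine step: passing to an idempotent power $u^n$ and using it to \emph{cancel} the unwanted left factor from the identity $b=u^nb(sv)^n$, rather than trying to manipulate that identity directly. This is exactly where finiteness is used -- the argument needs only that each element has an idempotent power -- and it is the prototype of the stability phenomenon that the rest of the paper develops.
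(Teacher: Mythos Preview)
Your proof is correct and follows essentially the same two-step route the paper takes: first establish (right- and left-)stability of a finite semigroup, then deduce $\J\sub\D$ from stability exactly as in the proof of Theorem~\ref{thm:stable} (with $U=S$), via the intermediate element $c=av$. The only minor variation is inside the stability lemma: you iterate to $b=u^{n}b(sv)^{n}$ and cancel using an idempotent power of the \emph{left} factor $u$, whereas the paper's Proposition~\ref{prop:GB_Stable} iterates to $x=a^{k}x(yb)^{k}$ and cancels using a group power (and its inverse) of the \emph{right} factor $yb$ --- mirror-image versions of the same trick.
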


The conclusion of Theorem \ref{thm:DJ} is valid if ``finite'' is replaced by ``periodic'' or ``group-bound'' or ``stable''.  For the last, see \cite[Theorem 1]{KW1957} or \cite[Corollary A.2.5]{RSbook}, but note that the definition of stability in \cite{KW1957} and other older papers such as \cite{AHK1965} is slightly different from the modern definition (we will discuss this further at the end of the paper).  We recall the meanings of these terms below, and discuss more general statements still.

Our second motivating theorem concerns the inheritance of Green's relations by subsemigroups.  In all that follows, $U$ will always denote a subsemigroup of a semigroup $S$.  Letting $\G$ stand for any of the five Green's relations, we shall denote~$\G$ on the semigroup~$U$ by~$\Gd$. We use a similar convention for the pre-orders $\leq_{\Gd}$ (for ${\G}\not={\D}$).  Certainly for any $\G$ we have
\[
{\G'}\sub{\G}\restr_U,
\]
where ${\G}\restr_U={\G}\cap(U\times U)$ denotes the restriction of $\G$ to $U$.  While ${\G'}={\G}\restr_U$ need not hold in general, the next result concerns a special case in which it does for the three smaller relations:

\begin{thm}\label{thm:RLH}
If $U$ is a regular subsemigroup of $S$, and if $\G$ is any of $\R$, $\L$ or $\H$, then ${\Gd}={\G}\restr_U$.
\end{thm}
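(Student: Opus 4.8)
The plan is to establish the reverse inclusion ${\G}\restr_U\sub{\Gd}$, since ${\Gd}\sub{\G}\restr_U$ is already noted above. Two quick reductions come first. Because ${\H}={\R}\cap{\L}$ holds on any semigroup, and in particular ${\Hd}={\Rd}\cap{\Ld}$ on $U$, the case ${\G}={\H}$ will follow at once from the cases ${\G}={\R}$ and ${\G}={\L}$. And the ${\L}$ case is the left--right dual of the ${\R}$ case: passing to $S^{\mathrm{op}}$ turns $\L$ into $\R$ while keeping $U$ a regular subsemigroup. So the whole theorem reduces to showing: if $a,b\in U$ and $a\R b$ in $S$, then $a\Rd b$.

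To prove this I would begin from the hypothesis $a\R b$ in $S$, which lets me write $a=bs$ and $b=at$ for some $s,t\in S^1$. The coefficients $s,t$ need not lie in $U^1$, and the whole problem is to produce replacements that do. Here I would invoke regularity of $U$: fix inverses $a',b'\in U$ of $a$ and $b$, so $aa'a=a$ and $bb'b=b$. Then a one-line computation,
\[
bb'a=(bb'b)s=bs=a \AND aa'b=(aa'a)t=at=b,
\]
gives $a=b\cdot(b'a)$ and $b=a\cdot(a'b)$ with $b'a,\,a'b\in U$, so $a\leqRd b$ and $b\leqRd a$, that is, $a\Rd b$.

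The calculations here are routine; the only genuine idea is the use of regularity, and it is worth isolating why it works. The idempotents $bb'$ and $aa'$ act as ``absorbers'': $bb'$ fixes $a$ on the left (precisely because $a\leqR b$) and so converts the external witness $s$ into the internal witness $b'a$, and symmetrically for $aa'$. An essentially equivalent route would be to note first that $a\Rd aa'$ and $b\Rd bb'$, reducing to a pair of idempotents $e=aa'$, $f=bb'$ with $e\R f$ in $S$, and then to use the standard fact that for idempotents $e\R f$ forces $ef=f$ and $fe=e$ --- which, since $e,f\in U$, immediately yields $e\Rd f$. I expect the main (and essentially the only) obstacle to be spotting this absorption trick; once one sees that regularity is exactly what lets the $S^1$-coefficients be pulled back into $U$, there is nothing left to do, and the role of the regularity hypothesis becomes transparent.
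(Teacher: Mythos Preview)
Your proof is correct and is essentially the paper's argument: the key computation $a=bs=(bb'b)s=b(b'a)$ is exactly the paper's $x=ya=yzya=y(zx)$ with $z\in U$ an inverse of $y$, yielding the internal witness in $U$. The only cosmetic difference is that the paper packages this as a separate lemma at the preorder level (requiring only $y\in\Reg(U)$, not full regularity of $U$) before deducing the theorem, whereas you argue directly; the ``absorption'' idea you describe is precisely the mechanism in both.
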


Theorem \ref{thm:RLH} is generally attributed to Hall, cf.~\cite[Result 9(ii)]{Hall1970}, and also \cite[Lemma~1.2.13]{Higgins1992}. The first proof in the literature however is due to Anderson et.~al.~\cite[Proposition 2]{AHK1965} and that is recorded in the book \cite[Proposition 2.4.3]{Lallement1979}.  The following is essentially the argument in \cite[Lemma~2.8]{Sandwiches1}.   We write $\Reg(T)$ for the set of all regular elements of any semigroup $T$.

\begin{lemma}\label{lem:leqR}
Let $x,y\in U$, a subsemigroup of $S$, with $y\in\Reg(U)$. Then $x\leqR y \implies x\leqRd y$. 
\end{lemma}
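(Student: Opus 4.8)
The plan is to convert a factorisation of $x$ over $S$ that witnesses $x\leqR y$ into a factorisation over $U$ that witnesses $x\leqRd y$, using a ``weak inverse'' of $y$ drawn from $U$. First I would dispose of the trivial case: if $x=y$ then $x\leqRd y$ holds automatically, so assume instead that $x=ys$ for some $s\in S$. Since $y\in\Reg(U)$, fix an element $y'\in U$ with $yy'y=y$; note that only this one identity is needed, and $y'$ need not be a genuine inverse of $y$.

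The crux is then the one-line computation
\[
y\cdot(y'x) = yy'(ys) = (yy'y)s = ys = x,
\]
so that $x = y\cdot(y'x)$. Since $y'\in U$ and $x\in U$, we have $y'x\in U$, and hence $x\in yU^1$; but $x\in yU^1$ is precisely the assertion $x\leqRd y$. Equivalently, one may package this via right ideals: $x\leqRd y$ says $xU^1\subseteq yU^1$, and the displayed identity gives $x\in yU^1$, which yields the inclusion.

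I do not expect a genuine obstacle here. The one point that must be handled with care is that the new right multiplier $y'x$ really lies in $U$: this is exactly where the hypotheses are used, namely $x\in U$ (rather than merely $x\in S$) together with $y'\in U$, the latter being available precisely because $y$ is regular \emph{in $U$} and not just in $S$. It is worth recording that the regularity hypothesis cannot simply be dropped, since ${\Gd}={\G}\restr_U$ fails in general; this lemma, together with its left--right dual for $\leqL$, is the mechanism behind Theorem \ref{thm:RLH}.
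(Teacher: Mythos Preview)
Your proof is correct and is essentially identical to the paper's: the paper writes $x=ya$ with $a\in S^1$, picks $z\in U$ with $y=yzy$, and computes $x=ya=yzya=yzx$ with $zx\in U$. The only difference is cosmetic---by taking $a\in S^1$ rather than $s\in S$, the paper avoids your separate treatment of the case $x=y$ (and indeed your own computation $y\cdot(y'x)=x$ goes through verbatim with $s\in S^1$, so the case split is not actually needed).
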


\pf
We have $x=ya$ for some $a\in S^{1}$. Take $z\in U$ with $y=yzy$.  Then $x=ya=yzya=yzx$.  Since $zx\in U$, this shows that $x\leqRd y$. 
\epf

From Lemma \ref{lem:leqR} (and its dual), it follows that if $x,y\in\Reg(U)$, and if $\G$ is any of $\R$, $\L$ or~$\H$, then $x\G y\implies x\Gd y$.  Theorem \ref{thm:RLH} then follows immediately.

As is well known, the conclusion of Theorem \ref{thm:RLH} does not extend to either the $\D$- or the $\J$-relation, even for a finite regular semigroup $S$ as shown in the first of the following example set.

\begin{egs}
Remembering that ${\D}={\J}$ for finite semigroups (cf.~Theorem \ref{thm:DJ}), let $S=\M^{0}[1;2,2]$ be the $2\times2$ combinatorial (meaning $\H$-trivial) Brandt semigroup, a representation of which is the set of $2\times2$ binary matrices with at most one non-zero entry, under the operation of multiplication: 
\[
S=\{0\}\cup\{a_{ij}:1\leq i,j\leq2\},
\]
where $a_{ij}$ has $1$ as its entry at position $(i,j)$. Observe that $S$ is a five-element inverse semigroup consisting of $a=a_{12}$ and $b=a_{21}$, which are nilpotents, together with three idempotents $0$, $e=a_{11}$ and $f=a_{22}$. The multiplication follows the rule $a_{ij}a_{jk}=a_{ik}$ with all other products being zero. The semigroup $S$ provides the following pair of counterexamples.
\ben

\item The subsemigroup $U=\{0,e,f\}$, being a semilattice, is $\J$-trivial and in particular $(e,f)\not\in{\Dd}$. However, $(e,f)\in{\D}\restr_U$ as $S\setminus\{0\}$ forms a $\D$-class of $S$, with $a$ and $b$ being mutual inverses and $e\R a\L f$ by virtue of the products $ea=a$, $ab=e$, $af=a$ and $ba=f$. Hence the conclusion of Theorem \ref{thm:RLH} is not valid if ${\G}\in\{{\D},{\J}\}$.

\item With the same containing semigroup $S$, take $U=\{0,e,a\}$, which is a subsemigroup of $S$ in which the only non-zero products are $e^{2}=e$ and $ea=a$. Note that $e\in\Reg(U)$ and $e\R a$ in $S$, so certainly $e\leqR a$.  But the equation $au=e$ has no solution for~$u\in U^{1}$, so that $e\not\leqRd a$ (in fact, $a<_{\Rd}e$). So Lemma \ref{lem:leqR} does not hold in general if $y$ (taken here to be $a$) is not regular in $U$, despite $x$ (taken here to be $e$) being regular in~$U$. 

\een
The previous example shows that it is possible to have $x\leqR y$ with $x\in\Reg(U)$ but $x>_{\Rd}y$.  Note that it is never possible to have $x<_{\R}y$ but $x\geq_{\Rd}y$, as the latter implies $x\geq_{\R}y$.  However, it is possible to have $x<_{\R}y$ but $x\not\leqRd y$ with $x\in$ Reg$(U)$ as shown in the next example. 
\ben
\setcounter{enumi}{2}
\item Our semigroup $S$ is $I_{3}$, the symmetric inverse semigroup on the base set $\{1,2,3\}$. Let $f$ be the (partial) mapping defined by $1f=2$ and $2f=3$, and let $g$ be the transposition $(1\,2)$.  Here $3f$ and $3g$ are both undefined. Now $fg=e$ is the idempotent with domain $\{1\}$. Hence in $I_{3}$ we have $e<_{\R}f$.  The inequality is certainly strict since $e<_{\J}f$, as $\rank(e)=1<2=\rank(f)$. Now let $U=\langle f,e\rangle$. Since the elements of $U$ are non-decreasing and $1f>1$ it follows that there is no solution $u\in U^{1}$ to the equation $fu=e$. Hence we have that $e\in\Reg(U)$ and $e<_{\R}f$ but $e\not\leqRd f$. 
\een
\end{egs}

Despite the first of the above examples, we do have the following, which concerns the case in which the regular elements of $S$ form a subsemigroup.  For a proof, see \cite[Lemma 3.8]{Sandwiches1}, the proof of which uses Theorem \ref{thm:RLH}.

\begin{lemma}\label{lem:RegS_D}
If $U=\Reg(S)$ is a subsemigroup of $S$, then ${\Dd}={\D}\restr_U$.
\end{lemma}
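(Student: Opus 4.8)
The plan is to establish the non-trivial inclusion ${\D}\restr_U\sub{\Dd}$, the reverse inclusion ${\Dd}\sub{\D}\restr_U$ being valid for every subsemigroup. So take $x,y\in U$ with $x\D y$ in $S$; by the definition of $\D$ there is some $z\in S$ with $x\R z\L y$ in $S$. If one can show that $z$ may be taken in $U$, then Theorem \ref{thm:RLH} should close the argument at once, and this is precisely why the intended proof (cf.\ \cite[Lemma~3.8]{Sandwiches1}) invokes that theorem.

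First I would check that the connecting element $z$ automatically lies in $U=\Reg(S)$. Since $x\in U$ is regular in $S$, and regularity is preserved by each of $\R$ and $\L$ — equivalently, the $\D$-class of a regular element consists wholly of regular elements, a standard fact — the element $z$, being $\D$-related to $x$ in $S$, is regular in $S$; that is, $z\in\Reg(S)=U$.

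Next I would verify that $U=\Reg(S)$ is itself a \emph{regular} subsemigroup of $S$, so that Theorem \ref{thm:RLH} is applicable to it. This is short: given $u\in U$, pick an inverse $v\in S$ of $u$ (one exists because $u$ is regular in $S$), so that $u=uvu$ and $v=vuv$; the second equation exhibits $v$ as a regular element of $S$, hence $v\in\Reg(S)=U$, and then $u=uvu$ shows $u$ to be regular in $U$. With $U$ known to be regular, Theorem \ref{thm:RLH} yields ${\Rd}={\R}\restr_U$ and ${\Ld}={\L}\restr_U$. Applying these to the chain $x\R z\L y$, all three terms of which now lie in $U$, gives $x\Rd z\Ld y$, and therefore $x\Dd y$, completing the argument.

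The only genuine obstacle is the step placing $z$ in $U$: without a witness to $x\D y$ lying inside the subsemigroup there is nothing linking $x$ and $y$ within $U$, so the whole proof hinges on the fact that regularity propagates across a $\D$-class of $S$. Everything after that — confirming that $\Reg(S)$ is a regular subsemigroup and transferring the $\R$- and $\L$-relations via Theorem \ref{thm:RLH} — is routine bookkeeping that I do not expect to cause difficulty.
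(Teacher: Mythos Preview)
Your argument is correct and follows precisely the approach the paper points to: the referenced proof in \cite[Lemma~3.8]{Sandwiches1} hinges on Theorem~\ref{thm:RLH}, and your use of it---after noting that the connecting element $z$ lies in $U$ because regularity propagates across a $\D$-class, and that $U=\Reg(S)$ is itself a regular subsemigroup---is exactly the intended route.
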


From this we may quickly deduce the following:

\begin{cor}\label{cor:RegS_DJ}
If $U=\Reg(S)$ is a subsemigroup of $S$, and if ${\D}={\J}$, then ${\Dd}={\Jd}={\J}\restr_U$.
\end{cor}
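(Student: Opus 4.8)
The plan is to show that the three relations ${\Dd}$, ${\Jd}$ and ${\J}\restr_U$ all coincide, by stringing together inclusions already at our disposal. Since $U=\Reg(S)$ is assumed to be a subsemigroup of $S$, Lemma \ref{lem:RegS_D} gives ${\Dd}={\D}\restr_U$; combining this with the hypothesis ${\D}={\J}$ immediately yields ${\Dd}={\J}\restr_U$. It therefore remains only to identify ${\Jd}$ with this common relation.

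For one inclusion, the general fact ${\G'}\sub{\G}\restr_U$ (valid for each of the five Green's relations, in particular for $\J$) gives ${\Jd}\sub{\J}\restr_U$. For the reverse, ${\J}\restr_U={\Dd}$ by the previous paragraph, and ${\Dd}\sub{\Jd}$ because ${\D}\sub{\J}$ holds in every semigroup, in particular in $U$. Chaining these, ${\Jd}\sub{\J}\restr_U={\Dd}\sub{\Jd}$, so equality holds throughout and ${\Dd}={\Jd}={\J}\restr_U$, as required.

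I do not expect any genuine obstacle here: the entire substance of the corollary is carried by Lemma \ref{lem:RegS_D} (whose proof in turn rests on Theorem \ref{thm:RLH}), and what remains is a purely formal manipulation of the inclusions ${\G'}\sub{\G}\restr_U$ and ${\D}\sub{\J}$ together with the hypothesis ${\D}={\J}$. For context one might add that the hypothesis ${\D}={\J}$ holds automatically when $S$ is finite, periodic, group-bound or stable (cf.\ Theorem \ref{thm:DJ} and the remarks following it), so the corollary applies in all of those settings whenever $\Reg(S)$ happens to form a subsemigroup.
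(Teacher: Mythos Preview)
Your argument is correct and is essentially the same as the paper's: both chain together ${\Dd}\sub{\Jd}\sub{\J}\restr_U={\D}\restr_U={\Dd}$, using Lemma~\ref{lem:RegS_D}, the hypothesis ${\D}={\J}$, and the generic inclusions ${\Gd}\sub{\G}\restr_U$ and ${\D}\sub{\J}$. The paper simply compresses this into a single displayed line.
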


\pf
We have ${\Dd}\sub{\Jd}\sub{\J}\restr_U={\D}\restr_U={\Dd}$.
\epf

Now that we have returned somewhat to the theme of Theorem \ref{thm:DJ}, let us work towards a result that extends it, and also gives some kind of analogue of Theorem \ref{thm:RLH} with respect to $\D$- and $\J$-classes.

Recall that a semigroup $S$ is \emph{periodic} if every element has finite order, while $S$ is \emph{group-bound} if for every element $x$ of $S$, some power of $x$ belongs to a subgroup of $S$.  So $S$ is periodic (or group-bound) if for all $x\in S$, there exists $k\geq1$ such that $x^k=x^{2k}$ (or $x^k\H x^{2k}$), respectively, cf.~\cite[Theorems~1.2.2 and~2.2.5]{Howie}.

An element $x$ of a semigroup $S$ is called \emph{right-stable} (or \emph{left-stable}) if for all $y\in S$, we have ${x\J xy\implies x\R xy}$ (or $x\J yx\implies x\L yx$), respectively.  An element is \emph{stable} if it is both left- and right-stable.  A semigroup is \emph{stable} if each of its elements is stable.  Similarly, we may speak of left- or right-stable semigroups.

%Consider now the following sequence of statements, concerning elements $x$ and $y$ of a semigroup $S$:
%\BEN
%\item \label{it:1} If $S$ is finite, then $x\J y \implies x\D y$.
%\item \label{it:2} If $S$ is periodic, then $x\J y \implies x\D y$.
%\item \label{it:3} If $S$ is group-bound, then $x\J y \implies x\D y$.
%\item \label{it:4} If $S$ is stable, then $x\J y \implies x\D y$.
%\item \label{it:5} If at least one of $x,y$ is stable, then $x\J y \implies x\D y$.
%\een
Consider now the following sequence of statements, concerning a semigroup $S$:
\BEN
\item \label{it:1} If $S$ is finite, then ${\D}={\J}$.
\item \label{it:2} If $S$ is periodic, then ${\D}={\J}$.
\item \label{it:3} If $S$ is group-bound, then ${\D}={\J}$.
\item \label{it:4} If $S$ is stable, then ${\D}={\J}$.
\item \label{it:5} If $x,y\in S$, and if at least one of $x,y$ is stable, then $x\J y \implies x\D y$.
\een
Statement \ref{it:1} is Theorem \ref{thm:DJ}, and the others are also well known, cf.~\cite[Proposition~2.1.4]{Howie}, \cite[Theorem 1.2(vi) and Remark 1.7]{HM1979}, \cite[Corollary A.2.5]{RSbook} and \cite[Propositions~2.3.7 and 2.3.9]{Lallement1979}, respectively.  Moreover we have \ref{it:5}$\implies$\ref{it:4}$\implies$\ref{it:3}$\implies$\ref{it:2}$\implies$\ref{it:1}, since
\begin{equation}\label{eq:properties}
\text{finite$\implies$periodic$\implies$group-bound$\implies$stable.}
\end{equation}
Indeed, only the last of these implications is not obvious, so we give a short proof for convenience (for a proof that group-bound semigroups satisfy an alternative formulation of stability, see \cite[Theorem 1.2(vi)]{HM1979}):

\begin{prop}\label{prop:GB_Stable}
Any group-bound semigroup is stable.
\end{prop}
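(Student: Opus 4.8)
The plan is to prove that every $x\in S$ is right-stable; the left-stable case is the exact left--right dual of this, and then $S$ is stable by definition. So fix $y\in S$ with $x\J xy$. Since $xy=x\cdot y$ we get $xy\leqR x$ for free, so everything comes down to showing $x\leqR xy$, that is, $x\in xy\,S^1$.

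First I would rewrite the hypothesis: $x\J xy$ gives $x=u(xy)v$ for some $u,v\in S^1$. The decisive move is to set $w:=yv$ --- note $w\in S$ since $y\in S$ --- and to observe that $xw=(xy)v$ and $x=uxw$, so that $x=u^nxw^n$ for every $n\ge1$ by a one-line induction. Next I would invoke group-boundedness \emph{for the element $w$}: there is $k\ge1$ with $w^k$ lying in a subgroup $H$ of $S$; let $f$ be the identity of $H$ and $g$ the inverse of $w^k$ in $H$, so that $w^kg=gw^k=f$ and $w^kf=fw^k=w^k$. The core is then a short cancellation: from $x=u^{2k}xw^{2k}$, right-multiply by $g$ and simplify $u^{2k}xw^k$ to $u^kx$ (using $u^kxw^k=x$) to get $xg=u^kxf$; right-multiplying this by $w^k$ and using $fw^k=w^k$ and $u^kxw^k=x$ collapses it to $xf=x$. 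Finally $x=xf=x(w^kg)=(xw^k)g$, and since $w^k=(yv)^k=y\cdot v(yv)^{k-1}$ we have $xw^k=(xy)\cdot v(yv)^{k-1}$, whence $x=(xy)\big(v(yv)^{k-1}g\big)\in xy\,S^1$, which is exactly $x\leqR xy$.

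The step I expect to be the genuine obstacle is deciding \emph{which} factor to apply group-boundedness to. It is tempting to try to cancel the left factor $u$ in $x=u(xy)v$, but that route only ever produces an identity of the shape $ex=x$ together with an $\L$-flavoured conclusion, never peeling $u$ off a right ideal. The point is that an $\R$-statement forces one to invert the \emph{right} factor $w=yv$ instead --- and one must notice that $w$ genuinely lies in $S$, so that the hypothesis does apply to it --- while the stray $u$ is simply carried through the computation untouched. Everything else (the iteration $x=u^nxw^n$ and the one-line group cancellation) is routine.
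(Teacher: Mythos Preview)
Your proof is correct and follows essentially the same approach as the paper: both write $x=u\,xy\,v$, iterate to $x=u^n x(yv)^n$, apply group-boundedness to the right factor $w=yv$ to obtain a power $w^k$ in a subgroup with inverse $g$, and then deduce $x=xw^kg\in xyS^1$. The paper's computation is marginally shorter (it uses $(yv)^k=(yv)^{2k}g$ directly inside $x=u^kx(yv)^k$ rather than first proving $xf=x$), but the key idea --- invert the right factor, not the left one --- is identical.
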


\begin{proof}
Let $S$ be a group-bound semigroup, and let $x,y\in S$.  Clearly it suffices to show that ${x\J xy\implies x\leqR xy}$ and $x\J yx\implies x\leqL yx$.  By duality, it suffices to prove only the first of these implications.  So suppose $x=axyb$ for some $a,b\in S^1$.  Some power of $yb$ belongs to a subgroup of $S$, say $(yb)^k$.  Let $z$ be the inverse of $(yb)^k$ in this subgroup.  Then $x=a^kx(yb)^k = a^kx(yb)^k(yb)^kz = x(yb)^kz \leqR xy$.
\end{proof}

The converse of Proposition \ref{prop:GB_Stable} does not hold.  For example, the semigroup of natural numbers under addition is stable (as is any $\J$-trivial semigroup) but not group-bound.  It is easy to find examples to show that the other implications in \eqref{eq:properties} are also non-reversible in general.

The next result is a generalisation of statement \ref{it:5} above (take $U=S$ in the statement):

\begin{thm}\label{thm:stable}
Let $x$ and $y$ be elements of a semigroup $S$ with $y$ stable, and suppose $x$ and $y$ belong to some subsemigroup $U$ of $S$ for which ${\Ld}={\L}\restr_U$ and ${\Rd}={\R}\restr_U$.  If $x\J y$ and $x\leqJd y$, then $x\Dd y$.
\end{thm}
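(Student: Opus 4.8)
The goal is to exhibit an element $w\in U$ with $x\,\Rd\,w$ and $w\,\Ld\,y$, since then $x\,\Dd\,y$ (recall $\Dd$ is the composite $\Rd\circ\Ld$ on $U$). The plan is to produce such a $w$ first as an element related to $x$ and $y$ by $\R$ and $\L$ \emph{in $S$} — taking care that $w$ actually lies in $U$ — and then to pull these relations back down into $U$ using the hypotheses $\Rd={\R}\restr_U$ and $\Ld={\L}\restr_U$. In effect this is the proof of statement \ref{it:5} with enough bookkeeping to keep everything inside $U$.

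To build $w$: from $x\leqJd y$ fix a factorisation $x=syt$ with $s,t\in U^1$; it is essential to keep $s,t$ in $U^1$ rather than merely in $S^1$. From $x\J y$ we have $y\leqJ x$ in $S$, and chaining this with $x=s(yt)$ and with $x=(sy)t$ gives
\[
y\leqJ x\leqJ yt\leqJ y\AND y\leqJ x\leqJ sy\leqJ y ,
\]
so $y\J yt$ and $y\J sy$ in $S$. Now the stability of $y$ enters: since $y$ is right-stable (with the case $t=1$ trivial), $y\J yt$ forces $y\,\R\,yt$; dually, $y$ being left-stable and $y\J sy$ give $y\,\L\,sy$. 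Because $\R$ is a left congruence, left-multiplying $y\,\R\,yt$ by $s$ yields $sy=s\cdot y\ \R\ s\cdot yt=x$. Thus, in $S$, we have $x\,\R\,sy$ and $sy\,\L\,y$.

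Finally set $w=sy$. Then $w\in U$, since $s\in U^1$ and $y\in U$, so the equalities $\Rd={\R}\restr_U$ and $\Ld={\L}\restr_U$ convert $x\,\R\,w$ and $w\,\L\,y$ into $x\,\Rd\,w$ and $w\,\Ld\,y$, whence $x\,\Dd\,y$. I do not anticipate a genuine obstacle: the only things to watch are bookkeeping points — that right- and left-stability are being applied to the stable element $y$ (not to $x$), and only after the relations $y\J yt$ and $y\J sy$ have been manufactured, and that the witnessing factorisation $x=syt$ must live over $U^1$ so that $w=sy$ stays in $U$. The identity $\Dd=\Rd\circ\Ld$ and the one-sided congruence properties of $\R$ and $\L$ are the only general facts invoked.
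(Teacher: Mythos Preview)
Your proof is correct and is essentially the same argument as the paper's: both fix a factorisation $x=syt$ with $s,t\in U^1$, use $x\J y$ to deduce $y\J sy$ and $y\J yt$, apply stability of $y$ to obtain $y\L sy$ and $y\R yt$, use the left-congruence property of $\R$ to get $x\R sy$, and finally transfer everything into $U$ via the hypotheses ${\Rd}={\R}\restr_U$ and ${\Ld}={\L}\restr_U$. The only cosmetic difference is that the paper transfers $y\R yt$ into $U$ first and then applies the left-congruence property of $\Rd$, whereas you apply the left-congruence property of $\R$ in $S$ first and then transfer; these are interchangeable.
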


\pf
Since $x\leqJd y$, we have $x=ayb$ for some $a,b\in U^1$.  It then follows that
\[
x=ayb \leqJ ay \leqJ y \J x,
\]
so that all the above elements are $\J$-related.  In particular, $y\J ay$, and so stability of $y$ gives $y\L ay$, and hence $y\Ld ay$ (as $y,ay\in U$ and ${\Ld}={\L}\restr_U$).  A similar calculation gives $y\Rd yb$.  Since~$\Rd$ is a left congruence, it follows that $ay\Rd ayb=x$.  Thus, $x\Rd ay \Ld y$, and so $x\Dd y$.
\epf

We have already noted that Theorem \ref{thm:DJ} follows from Theorem \ref{thm:stable}, as do each of statements \ref{it:1}--\ref{it:5} above.  We also have the following:

\begin{cor}\label{cor:regstab}
Let $x,y\in U$, a regular subsemigroup of a stable semigroup $S$, with $x\J y$ and $x\leqJd y$.  Then $x\Dd y$. 
\end{cor}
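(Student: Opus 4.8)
The plan is to derive this as an immediate consequence of Theorem \ref{thm:stable}, whose hypotheses are tailor-made for exactly this situation. So the work reduces to checking that the three assumptions of Theorem \ref{thm:stable} are met by $x$, $y$ and $U$ as given here.

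First I would observe that $y$ is stable: this is automatic, since $S$ is assumed to be a stable semigroup, and stability of a semigroup means precisely that every one of its elements is stable. Next, I need a subsemigroup containing $x$ and $y$ on which the restricted $\L$- and $\R$-relations coincide with $\Ld$ and $\Rd$; the natural candidate is $U$ itself. Because $U$ is a \emph{regular} subsemigroup of $S$, Theorem \ref{thm:RLH} applies with $\G$ equal to $\L$ and to $\R$, yielding ${\Ld}={\L}\restr_U$ and ${\Rd}={\R}\restr_U$, which is exactly what is required. Finally, the two conditions $x\J y$ and $x\leqJd y$ are part of the hypotheses of the corollary, so there is nothing to verify there.

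With all three assumptions in place, Theorem \ref{thm:stable} delivers $x\Dd y$, completing the argument. I do not anticipate any genuine obstacle: the only point worth flagging is the conceptual one, namely that regularity of $U$ is invoked solely to secure the inheritance of $\L$ and $\R$ via Theorem \ref{thm:RLH}, and that stability of the ambient semigroup $S$ is used only to guarantee stability of the single element $y$. One could phrase the proof in a single line, observing that ${\Dd}\sub{\Jd}$ always holds, so that together with the conclusion $x\Dd y$ this also recovers $x\Jd y$ if desired, though that is not asked for.
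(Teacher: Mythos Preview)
Your proposal is correct and follows exactly the paper's own argument: invoke Theorem \ref{thm:RLH} (using regularity of $U$) to obtain ${\Ld}={\L}\restr_U$ and ${\Rd}={\R}\restr_U$, note that $y$ is stable since $S$ is, and then apply Theorem \ref{thm:stable}. There is nothing to add.
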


\pf
By Theorem \ref{thm:RLH}, we have ${\Ld}={\L}\restr_U$ and ${\Rd}={\R}\restr_U$.  Since $S$ is stable, Theorem \ref{thm:stable} applies.
\epf

%\begin{rem}
%Corollary \ref{cor:regstab} may seem like a version of Theorem \ref{thm:RLH} for the $\J$ relation, since $x\Dd y$ of course implies $x\Jd y$.  However, it is not as straightforward as that.  Indeed, the $x\leqJd y$ assumption cannot be removed, as shown by Example 
%\end{rem}

We may also infer (by taking $U=S$ in Theorem \ref{thm:stable}) the following fact, proved in \cite[Proposition~2.3.9]{Lallement1979}, which also of course implies that ${\D}={\J}$ in any stable semigroup.  

\begin{cor}
In any semigroup, a $\J$-class containing a stable element is a $\D$-class.
\end{cor}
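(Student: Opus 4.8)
The plan is to derive this corollary directly from Theorem~\ref{thm:stable} by specialising to the case $U=S$. Suppose $J$ is a $\J$-class of a semigroup $S$ and that $J$ contains a stable element, say $y\in J$. We must show that $J$ is a single $\D$-class, i.e.\ that any $x\in J$ satisfies $x\D y$ (since $\D\sub\J$ always, this gives $J$ equal to the $\D$-class of $y$).

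First I would fix an arbitrary $x\in J$, so that $x\J y$. The key observation is that when $U=S$ we trivially have ${\L'}={\L}\restr_U$ and ${\R'}={\R}\restr_U$ (both sides are just $\L$ and $\R$ on $S$), and moreover $x\leqJ y$ is automatic from $x\J y$, which (again with $U=S$) is the hypothesis ``$x\leq_{\J'}y$'' of Theorem~\ref{thm:stable}. Hence all the hypotheses of Theorem~\ref{thm:stable} are satisfied with this choice of $U$, and the theorem yields $x\D' y$, that is $x\D y$ in $S$.

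Since $x\in J$ was arbitrary, every element of $J$ is $\D$-related to $y$, and therefore $J$ is contained in the $\D$-class $D_y$ of $y$; the reverse containment $D_y\sub J$ holds because $\D\sub\J$. Thus $J=D_y$ is a $\D$-class, as required.

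There is essentially no obstacle here: the corollary is a pure specialisation (the case $U=S$), and the only thing to check is that the two hypotheses ${\L'}={\L}\restr_U$ and ${\R'}={\R}\restr_U$ become vacuous when $U=S$, and that $x\leq_{\J'}y$ reduces to $x\leqJ y$, which in turn follows from $x\J y$. The one point worth stating explicitly in the write-up is that we only need $\D\sub\J$ (true in every semigroup) for the reverse inclusion, so that no additional stability or finiteness assumption on the other elements of $J$ is needed.
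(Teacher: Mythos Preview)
Your proposal is correct and matches the paper's own argument exactly: the paper derives the corollary by the single remark ``take $U=S$ in Theorem~\ref{thm:stable}'', and you have simply spelled out the trivial verifications that this entails. There is nothing to add or change.
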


It was also proved in \cite[Proposition~2.3.7]{Lallement1979} that every element of such a $\J$-class is stable.

\begin{eg}\label{eg:bic}
The stability assumption on $y$ cannot be dropped in Theorem \ref{thm:stable}, due to the fact that any semigroup~$U$ may be embedded in a (regular) bisimple (i.e., $\D$-universal) monoid $S$: see \cite[Theorem~2]{Preston1959} and also \cite[Corollary 1]{Higgins1990}, cf.~\cite[Corollary~1.2.15]{Higgins1992}.  If $U$ here is regular, then certainly ${\Ld}={\L}\restr_U$ and ${\Rd}={\R}\restr_U$, cf.~Theorem \ref{thm:RLH}.  It follows that the two-element semilattice $U=\{0,e\}$ (which is certainly regular) may be embedded in a bisimple semigroup $S$ and then $0\J e$ and $0\leqJd e$, yet $(0,e)\not\in{\Jd}={\Dd}$, contrary to the conclusion of the theorem.  Incidentally, this shows that any bisimple semigroup embedding $U$ is not stable.

 As a concrete example that illustrates the
previous remark we may take~$S$ to be the bicyclic monoid, which
has presentation $S=\langle a,b:ab=1\rangle$. Here $S$ is a bisimple
inverse semigroup, which contains an infinite descending chain of
idempotents: ${1>ba>b^{2}a^{2}>\cdots}$ (here $>$ is the natural partial order defined by $e\leq f \iff e=fef$), and $S$ possesses infinitely many copies of the two-element semilattice.  As noted at the end of the previous paragraph, it follows that the bicyclic monoid $S$ is not stable.  It is also easy to see this directly, as for example $(1,1\cdot b)\in{\J}\setminus{\R}$ and $(1,a\cdot1)\in{\J}\setminus{\L}$.
\end{eg}

Although every semigroup is embeddable in a bisimple semigroup, not every semigroup is embeddable in a stable semigroup, or even in a left- or right-stable semigroup.  The next result (cf.~\cite[Lemma 2.9]{Sandwiches1}) will help clarify this.

\begin{prop}\label{prop:regembed}
If $U$ is a regular subsemigroup of a left- or right-stable semigroup $S$, then $U$ is left- or right-stable, respectively.
\end{prop}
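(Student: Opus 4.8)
The plan is to prove the right-stable case; the left-stable case then follows by the usual left--right duality. So I assume $S$ is right-stable and $U$ is a regular subsemigroup of $S$, and I aim to show that every element of $U$ is right-stable as an element of $U$, i.e.\ that $x\Jd xy\implies x\Rd xy$ for all $x,y\in U$.

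Fix $x,y\in U$ and suppose $x\Jd xy$. The first step is to lift this relation up to $S$: from the general containment ${\G'}\sub{\G}\restr_U$ (with ${\G}={\J}$) we get $x\J xy$ in $S$. Now I apply right-stability of $S$ directly to the element $x$ and the element $y\in S$: since $x\J xy$, right-stability yields $x\R xy$ in $S$. The second step is to bring this back down to $U$. Here is where regularity enters: because $U$ is regular, Theorem \ref{thm:RLH} gives ${\Rd}={\R}\restr_U$, and since $x,xy\in U$ with $x\R xy$, we conclude $x\Rd xy$. As $x,y\in U$ were arbitrary, $U$ is right-stable, which is what was required.

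I do not expect a genuine obstacle in this argument — the proof is essentially a two-line lift-and-descend. The one point worth flagging is that regularity of $U$ is doing all the work: it is precisely what makes Theorem \ref{thm:RLH} applicable, so that $\R$ (and dually $\L$) restricted to $U$ coincides with $\Rd$ (resp.\ $\Ld$), and that coincidence is the mechanism allowing the $\R$-relation obtained in the ambient stable semigroup $S$ to descend to $U$. Without the regularity hypothesis one would still obtain $x\R xy$ in $S$ but could not in general conclude $x\Rd xy$, so the statement genuinely needs it.
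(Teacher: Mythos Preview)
Your proof is correct and is essentially identical to the paper's own argument: the paper also handles the right-stable case by the chain $x\Jd xy \implies x\J xy \implies x\R xy \implies x\Rd xy$, citing stability of $S$ for the middle step and Theorem~\ref{thm:RLH} for the last. Your additional commentary on why regularity is needed is accurate and anticipates Example~\ref{egs:nonembed}\ref{non1}.
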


\pf
If $S$ is right-stable (the left-stable case is similar), then for any $x,y\in U$,
%By duality, it is enough to prove just the statement concerning right-stability.  Since $U$ is regular, Theorem \ref{thm:RLH} says that ${\R'}={\R}\restr_U$.  Thus, for any $x,y\in U$,
\[
x\J' xy \implies x\J xy \implies x\R xy \implies x\R' xy,
\]
using the stability of $S$ for the second implication, and Theorem \ref{thm:RLH} for the third.
\epf

%\begin{eg}
%In Example \ref{egs:nonembed}\ref{non3}, we gave an example of a right-stable semigroup that is not embeddable in a left-stable semigroup.  One might wonder if the class of (right- or left-) stable semigroups is closed under taking subsemigroups.  But this is not the case.  For example, in \cite[page 60, Exercise 1]{Howie} we find the following construction.  Write $\RR$ for the set of real numbers,~$\RR^+$ for the set of positive reals, and $S$ for the set of all $2\times2$ matrices of the form $\left[\begin{smallmatrix}a&0\\b&1\end{smallmatrix}\right]$, for $a,b\in\RR^+$.  Then $S$ is a semigroup under ordinary matrix multiplication.  Moreover, $S$ is simple ($\J$-universal) but $\D$-trivial (and hence $\R$-, $\L$- and $\H$-trivial), and has no idempotents.  It follows that for any $x\in S$, $x^2\J x$ holds, yet neither $x^2\R x$ nor $x^2\L x$ hold.  Thus, $S$ is neither left- nor right-stable.  On the other hand, $S$ clearly embeds in the monoid of all $2\times2$ matrices over $\RR$, and it is well known that this monoid is stable, cf.~\cite[Lemma 3.1]{DE2018}.
%\end{eg}

\begin{egs}\label{egs:nonembed}
\ben
\item \label{non1}  The regularity assumption on $U$ cannot be removed in Proposition \ref{prop:regembed}.  Indeed, we can see this by means of a construction from \cite[page 60, Exercise 1]{Howie}.  Let $S$ be the monoid of $2\times2$ real matrices under ordinary matrix multiplication.  It is well known that $S$ is stable, cf.~\cite[Lemma 3.1]{DE2018}.  Let $U$ be the subsemigroup of $S$ consisting of all matrices of the form $\left[\begin{smallmatrix}a&0\\b&1\end{smallmatrix}\right]$ with $a,b>0$.  Then $U$ is simple ($\J$-universal) but $\D$-trivial (and hence $\R$-, $\L$- and $\H$-trivial), and has no idempotents.  It follows that for any $x\in U$, $x^2\J x$ holds, yet neither $x^2\R x$ nor $x^2\L x$ holds.  Thus, $U$ is neither left- nor right-stable.

~ ~ Incidentally, this example shows that the classes of left-stable semigroups and right-stable semigroups are not closed under taking subsemigroups.  On the other hand, examining the properties listed in \eqref{eq:properties}, it is clear that the classes of finite semigroups and periodic semigroups are closed under taking subsemigroups.  The class of group-bound semigroups is not, however.  Indeed, the monoid of natural numbers is a subsemigroup of the group of integers, and the former is not group-bound.

\item \label{non2}  In Example \ref{eg:bic} we observed that the bicyclic semigroup $S=\langle a,b:ab=1\rangle$ is neither left- nor right-stable.  Since $S$ is regular, it follows from Proposition \ref{prop:regembed} that $S$ does not embed in any left- or right-stable semigroup.

%  $U=\langle a,b:ab=1\rangle$ be the bicyclic monoid, which is bisimple, and suppose that~$U$ were embedded in a semigroup $S$.  If $S$ were right-stable, then since $b\J b^{2}$ in $S$ it would follow that $b\R b^{2}$ in $S$ and so there exists $y\in S$ such that $b=b^{2}y$.  But then we would have $1=ab=ab^{2}y=by$, and then $a=aby=y$ so that $ba=by=1$, which is a contradiction.  Dually,~$U$ does not embed in a left-stable semigroup.

~ ~ Corollary 2.2 of \cite{AHK1965} says that the bicyclic semigroup cannot be embedded in a stable semigroup, but again we note that the definition of stability used in \cite{AHK1965} is different from (and stronger than, cf.~Example \ref{eg:OC}) the modern definition we have been using.

\item \label{non3}  It is even possible for a right-stable semigroup not to be embeddable in a left-stable semigroup (and a dual statement also holds).  For example, consider the Baer-Levi semigroup~$U$, consisting of all injective maps $f:\Z\to\Z$ with $\Z\setminus\im(f)$ infinite \cite{BL1932}.  By \cite[Theorem~8.2]{CPbook2}, $U$ is right-simple (i.e.,~$\R$-universal), and hence right-stable.  The same result in \cite{CPbook2} also shows that~$U$ is right-cancellative and without idempotents.  By contrast,~$U$ is not left-cancellative.  Indeed, any left-cancellative and right-simple semigroup is a right group (i.e., the direct product of a group with a right-zero semigroup, cf.~\cite[page 39]{CPbook1}), but $U$ is without idempotents.  The impossibility of embedding $U$ in a left-stable semigroup then follows from the next general fact:
\een
\end{egs}

\begin{prop}\label{prop:embed}
If $U$ is a right-simple semigroup, then the following are equivalent:
\ben
\item \label{embed1} $U$ embeds in a left-stable semigroup,
\item \label{embed2} $U$ is left-stable,
\item \label{embed3} $U$ is left-cancellative,
\item \label{embed4} $U$ is a right group.
\een
\end{prop}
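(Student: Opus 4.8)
The plan is to prove the cycle of implications (i) $\Rightarrow$ (iii) $\Rightarrow$ (iv) $\Rightarrow$ (ii) $\Rightarrow$ (i), so that the only substantial work lies in the step (i) $\Rightarrow$ (iii); the other three are short. For (ii) $\Rightarrow$ (i) there is nothing to prove, since $U$ is a subsemigroup of itself. For (iv) $\Rightarrow$ (ii), I would write $U=G\times E$ with $G$ a group and $E$ a right-zero semigroup; a right group is simple, hence $\Jd$ is universal, and it remains to check that $x\Ld yx$ for all $x,y\in U$. Writing $x=(g,e)$ and $y=(h,f)$, we have $yx=(hg,e)$ since $fe=e$ in $E$, and a one-line computation gives $U^1(g,e)=G\times\{e\}$; thus $G\times\{e\}$ is the $\Ld$-class of $x$, and it contains $yx$, which is precisely $x\Ld yx$. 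For (iii) $\Rightarrow$ (iv) I would simply invoke the classical structure theorem recalled before the proposition, namely that every right-simple left-cancellative semigroup is a right group, cf.~\cite[page~39]{CPbook1}; briefly, right-simplicity supplies, for each $a$, an $e$ with $ae=a$, whence $ae^{2}=ae$ and left-cancellation forces $e^{2}=e$, and a right-simple semigroup with an idempotent is a right group.

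For the main step (i) $\Rightarrow$ (iii), I would identify $U$ with its image inside a left-stable semigroup $S$. Since $U$ is $\R$-universal, $\Rd=U\times U$, and since $\Rd\sub{\R}\restr_U$ this forces all of $U$ to lie in a single $\R$-class of $S$. Hence, for any $x,y\in U$, we have $x\R yx$ (as $yx\in U$), so $x\J yx$, and left-stability of $S$ then gives $x\L yx$; thus $x\H yx$ for all $x,y\in U$. Taking $y=x$ shows $x\H x^{2}$, so that the $\H$-class $H_{x}$ of each $x\in U$ is a subgroup of $S$ (by the standard characterisation of group $\H$-classes). Now suppose $ab=ac$ with $a,b,c\in U$. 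Applying the observation just made with $(x,y)=(b,a)$ and with $(x,y)=(c,a)$ gives $b\H ab$ and $c\H ac=ab$, hence $b\H c$, so $G:=H_{b}$ is a subgroup of $S$ containing $b$, $c$ and $ab$. Let $u$ be the identity of $G$ and $v$ the inverse of $b$ in $G$. Then $au=a(bv)=(ab)v$ is a product of the two elements $ab,v\in G$ and therefore lies in $G$; and since $b,c\in G$ we have $ub=b$ and $uc=c$, so that $(au)b=a(ub)=ab=ac=a(uc)=(au)c$. Cancelling $au$ inside the group $G$ yields $b=c$, so $U$ is left-cancellative.

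I expect (i) $\Rightarrow$ (iii) to be the only real obstacle, and within it the delicate point is that the idempotent $u$ witnessing the subgroup structure lives in the ambient semigroup $S$ and need not belong to $U$ --- so one cannot conclude directly that $U$ itself contains an idempotent (which would give (iv) at once). The device that circumvents this is to perform the cancellation $ab=ac\Rightarrow b=c$ \emph{inside} the ambient group $H_{b}$ of $S$: one first replaces the ``external'' left multiplier $a$ by $au=(ab)v\in H_{b}$, which acts the same way as $a$ on the relevant elements of $U$ but is now invertible in $H_{b}$. Everything else is routine or already available in the literature.
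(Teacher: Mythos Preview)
Your proof is correct and follows essentially the same route as the paper: the same cycle (i)$\Rightarrow$(iii)$\Rightarrow$(iv)$\Rightarrow$(ii)$\Rightarrow$(i), with the substantive step (i)$\Rightarrow$(iii) resting on the observation that every $\H$-class of $S$ meeting $U$ is a group (since $x\H x^{2}$), after which one cancels inside that group. The only cosmetic difference is where the cancellation happens: the paper works in $H_{a}$, using that $b\R a\R e$ forces $eb=b$, so $b=eb=a^{-1}ab=a^{-1}ac=ec=c$; you instead first establish the stronger fact $x\H yx$ for all $x,y\in U$, place $b,c,ab$ in the common group $H_{b}$, and manufacture $au=(ab)v\in H_{b}$ to cancel there --- a slightly longer but equally valid manoeuvre.
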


\pf
\ref{embed1}$\implies$\ref{embed3}.  Suppose $U$ embeds in a left-stable semigroup $S$.  
As usual, we write Green's relations on $S$ and $U$ by $\G$ and $\G'$, respectively.  Let $a\in U$.  Then
\[
a\R'a^2 \implies a\R a^2 \implies a\J a^2 \implies a\L a^2,
\]
using left-stability of~$S$ in the last step.  Thus, $a\H a^2$.  It follows that $H_a$ is a group $\H$-class of~$S$ for all $a\in U$, cf.~\cite[Corollary~1.2.6]{Higgins1992}.  

Now suppose $a,b,c\in U$ are such that $ab=ac$.  Let $a^{-1}$ be the inverse of $a$ in the group~$H_a$, and let $e$ be the identity of this group.  Since $b\R' a$, we have $b\R a\R e$, so it follows that $b=eb$.  Similarly $c=ec$.  But then $b=eb=a^{-1}ab=a^{-1}ac=ec=c$.

\ref{embed3}$\implies$\ref{embed4}.  We have already noted that this follows from \cite[page 39]{CPbook1}.

\ref{embed4}$\implies$\ref{embed2}.  Suppose $U=G\times R$, where $G$ is a group and $R$ a right-zero semigroup.  Then $xy\L y$ for all $x,y\in U$.  Indeed, writing $x=(g,s)$ and $y=(h,t)$, we have $(g^{-1},s)\cdot xy=y$.

\ref{embed2}$\implies$\ref{embed1}.  This is clear.
\epf

We next classify the stable (bi)simple semigroups.  Recall that a semigroup is \emph{completely regular} if it is a union of groups: i.e., each $\H$-class is a group.  A semigroup is \emph{completely simple} if it is simple and completely regular.  (This is not the standard definition of completely simple, but see for example \cite[Theorem 3.3.2]{Howie} for a proof of equivalence with the standard definition, which we will not reproduce here.)  Any completely simple semigroup is isomorphic to a Rees matrix semigroup (without zero) over a group.  See \cite[Theorem 3.3.1]{Howie} for a proof of this fact, and also the definition of Rees matrix semigroups.  Since any Rees matrix semigroup over a group is bisimple (i.e., $\D$-universal), so too is any completely simple semigroup.

\newpage

\begin{prop}\label{prop:simple}
If $S$ is a simple (i.e., $\J$-universal) semigroup, then the following are equivalent:
\ben
\item \label{sim1} $S$ is stable,
\item \label{sim2} $S$ is completely regular,
\item \label{sim3} $S$ is completely simple,
\item \label{sim4} $x\R xy\L y$ for all $x,y\in S$.
\een
\end{prop}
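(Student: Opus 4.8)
The plan is to prove the cyclic chain (i) $\Rightarrow$ (iv) $\Rightarrow$ (ii) $\Rightarrow$ (i), together with the trivial equivalence (ii) $\Leftrightarrow$ (iii). The latter costs nothing: a completely simple semigroup is by definition completely regular, which gives (iii) $\Rightarrow$ (ii), while the reverse holds because $S$ is assumed simple and ``simple plus completely regular'' is exactly the definition of completely simple. The one fact I would use repeatedly is that, since $S$ is simple, it consists of a single $\J$-class, so $x\J z$ for \emph{all} $x,z\in S$.

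For (i) $\Rightarrow$ (iv): fix $x,y\in S$; by simplicity $x\J xy$ and $y\J xy$. Right-stability of $x$ turns $x\J xy$ into $x\R xy$, and left-stability of $y$ turns $y\J xy$ into $y\L xy$, i.e.\ $xy\L y$; hence $x\R xy\L y$. For (iv) $\Rightarrow$ (ii): put $y=x$ in (iv) to get $x\R x^2\L x$, hence $x\H x^2$, for every $x\in S$; this forces each $\H$-class $H_x$ to be a subgroup of $S$ (the same standard fact already used in the proof of Proposition~\ref{prop:embed}), so $S$ is a union of groups, i.e.\ completely regular. For (ii) $\Rightarrow$ (i): if $S$ is completely regular then every element of $S$ already lies in a subgroup (the case $k=1$ of the defining condition for group-bound), so $S$ is group-bound, and Proposition~\ref{prop:GB_Stable} gives that $S$ is stable.

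I do not expect a genuine obstacle here: the proposition is essentially an assembly of Proposition~\ref{prop:GB_Stable}, the definition of ``completely simple'', and the single-$\J$-class description of simple semigroups, with no new structure to construct. The only point needing a little care is the step (i) $\Rightarrow$ (iv), where one must keep straight that $x\J xy\Rightarrow x\R xy$ comes from \emph{right}-stability of the left factor $x$, whereas $xy\L y$ comes from \emph{left}-stability of the right factor $y$; swapping these would break the argument.
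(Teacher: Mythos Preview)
Your proof is correct. The paper's proof differs only in how the cycle is closed: your (i) $\Rightarrow$ (iv) and (iv) $\Rightarrow$ (ii) match the paper essentially verbatim, and (ii) $\Leftrightarrow$ (iii) is handled identically. But where you close the loop via (ii) $\Rightarrow$ (i) (completely regular $\Rightarrow$ group-bound $\Rightarrow$ stable, invoking Proposition~\ref{prop:GB_Stable}), the paper instead observes that (i) $\Leftrightarrow$ (iv) holds \emph{directly} (since $S$ is simple, the hypotheses $x\J xy$ and $y\J xy$ in the definition of stability are always satisfied, so stability is literally the statement of (iv)), and then separately proves (iii) $\Rightarrow$ (iv) using the Rees-matrix/bisimple structure: $L_x\cap R_y$ is a group $\H$-class, hence contains an idempotent, and then a standard fact (\cite[Proposition~2.3.7]{Howie}) forces $xy\in R_x\cap L_y$. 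Your route is arguably cleaner and more self-contained, since it stays within results already proved in the paper and avoids the external citation; the paper's route, on the other hand, makes the equivalence (i) $\Leftrightarrow$ (iv) visibly trivial rather than recovering it as a consequence of the cycle.
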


\pf
Since $S$ is simple, $x\J xy\J y$ for all $x,y\in S$, so clearly \ref{sim1}$\iff$\ref{sim4}.  We have also noted that the completely simple semigroups are precisely the completely regular simple semigroups, so \ref{sim2}$\iff$\ref{sim3}.

\ref{sim3}$\implies$\ref{sim4}.
Suppose $S$ is completely simple, and let $x,y\in S$ be arbitrary.  Since $S$ is bisimple, $L_x\cap R_y$ is nonempty, and hence an $\H$-class.  It is therefore a group (by complete regularity), and hence contains an idempotent.  It follows from \cite[Proposition 2.3.7]{Howie} that $xy\in R_x\cap L_y$.

\ref{sim4}$\implies$\ref{sim2}.  Take $x=y$ to deduce that $x\H x^2$ for all $x\in S$, so that $H_x$ is a group for all $x$, cf.~\cite[Theorem 2.2.5]{Howie}.
\epf

\begin{rem}
For a simple semigroup $S$, the condition ``$S$ embeds in a stable semigroup'' is not equivalent to conditions \ref{sim1}--\ref{sim4} from Proposition \ref{prop:simple}.  Indeed, the matrix semigroup denoted $U$ in Example \ref{egs:nonembed}\ref{non1} is simple and non-stable, but embeds in a stable semigroup.
\end{rem}

%\begin{eg}
%In Example \ref{egs:nonembed}\ref{non3}, we gave an example of a right-stable semigroup that is not embeddable in a left-stable semigroup.  One might wonder if the class of (right- or left-) stable semigroups is closed under taking subsemigroups.  But this is not the case.  For example, in \cite[page 60, Exercise 1]{Howie} we find the following construction.  Write $\RR$ for the set of real numbers, and $\RR^+$ for the positive reals.  Let $S$ be the monoid of all $2\times2$ matrices over $\RR$ under matrix multiplication, and let $U$ be the subsemigroup of $S$ consisting of all matrices of the form $\left[\begin{smallmatrix}a&0\\b&1\end{smallmatrix}\right]$, for $a,b\in\RR^+$.  Then $U$ is simple ($\J$-universal) but $\D$-trivial (and hence $\R$-, $\L$- and $\H$-trivial).  Since $U$ has no idempotents, we have $x^2\J x$ for all $x\in U$, yet neither $x^2\R x$ nor $x^2\L x$ hold.  It follows that $U$ is neither left- nor right-stable.  On the other hand, it is well known that $S$ (which clearly embeds $U$) is stable, cf.~\cite[Lemma 3.1]{DE2018}.
%\end{eg}

Above we have noted more than once that the older definition of stability \cite{KW1957,AHK1965} is not exactly the same as the modern one we have used.  We conclude with some comments regarding this.  Following Koch and Wallace's older definition \cite{KW1957}, we will say a semigroup $S$ is \emph{KW-stable} if for all $x,y\in S$,
\begin{equation}\label{eq:KW}
Sx\sub Sxy\implies Sx=Sxy \AND xS\sub yxS\implies xS=yxS.
\end{equation}
We will also say that $S$ is \emph{KW$^1$-stable} if for all $x,y\in S$ (or equivalently, all $x,y\in S^1$),
\begin{equation}\label{eq:KW1}
S^1x\sub S^1xy\implies S^1x=S^1xy \AND xS^1\sub yxS^1\implies xS^1=yxS^1.
\end{equation}
Clearly \eqref{eq:KW} and \eqref{eq:KW1} are equivalent if $S^1x=Sx$ and $xS^1=xS$ for all $x\in S$ (which occurs for example if $S$ is regular and/or a monoid).  The latter pair of equalities is equivalent to having $x\in xS\cap Sx$ for all $x\in S$.  Note also that \eqref{eq:KW1} simply says that for all $x,y\in S$,
\begin{equation}\label{eq:KW2}
x\leqL xy \implies x\L xy \AND x\leqR yx\implies x\R yx.
\end{equation}
Part \ref{S1} of the following was proved in \cite[Proposition 2.3.10]{Lallement1979}.  Part \ref{S2} was stated in \cite[footnote~2]{KW1957}.  We provide a simple proof (of \ref{S2}) for completeness.

\begin{prop}
\ben
\item \label{S1} KW$^1$-stability is equivalent to stability.
\item \label{S2} KW-stability implies (KW$^1$-)stability.
\een
\end{prop}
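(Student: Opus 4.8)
Part \ref{S1} is \cite[Proposition 2.3.10]{Lallement1979}, so I would simply quote it; for a self-contained proof one matches the two clauses of \eqref{eq:KW2} with left- and right-stability respectively. One inclusion in each case is a direct application of the definitions (e.g.\ to prove the first clause from left-stability, apply the definition of left-stability to the pair $(xy,s)$, where $x=s(xy)$), and the other is a short ``peeling'' argument: given $x\J yx$, write $x=p(yx)q$ with $p,q\in S^1$, set $w=p(yx)$, observe $w=(py)x\leqL x=wq$, apply the first clause of \eqref{eq:KW2} (the case $q=1$ being trivial) to get $w\L x$, and deduce $x\L yx$ from $w\leqL yx$ together with $yx\leqL x$. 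The substantive part is \ref{S2}, which I would prove as follows.

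First I would reduce to a single clause. The two conditions in \eqref{eq:KW} are left--right duals of one another — passing to the opposite semigroup turns the first into the second — and the same is true of the two conditions in \eqref{eq:KW1}; so it suffices to deduce the first clause of KW$^1$-stability from the first clause of KW-stability. Assume, then, that $S$ is KW-stable and take $x,y\in S$ with $S^1x\sub S^1xy$, i.e.\ $x=sxy$ for some $s\in S^1$; the goal is to show $S^1x=S^1xy$.

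The key step is a case split on whether $s$ is the adjoined identity. If $s=1$ then $x=xy$ and there is nothing to prove. If $s\in S$, then $ux=(us)(xy)\in Sxy$ for every $u\in S$, so $Sx\sub Sxy$; the first clause of KW-stability now gives $Sx=Sxy$, so $xy\in Sxy=Sx$, say $xy=tx$ with $t\in S$, whence $S^1xy=S^1tx\sub S^1x$. Combined with the hypothesis $S^1x\sub S^1xy$, this yields $S^1x=S^1xy$, as required; the dual argument applied to the second clauses finishes \ref{S2}.

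I do not anticipate a real obstacle in \ref{S2}: the only thing needing attention is the passage between $S$-scaled and $S^1$-scaled principal left ideals, and the point to notice is that one should split off the trivial case $x=xy$ before invoking KW-stability (which only constrains ideals of the form $Sx$), and then observe that the conclusion $Sx=Sxy$ it returns strengthens automatically to $S^1x=S^1xy$ because $xy\in Sx$. The small facts used along the way — for instance $S^1x\sub S^1z\iff x\in S^1z$, which holds since $1\in S^1$, and its right-handed counterpart — are immediate.
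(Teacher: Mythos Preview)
Your proof of \ref{S2} is correct and essentially identical to the paper's: both reduce by duality to the first clause, split off the trivial case $x=xy$, observe that otherwise $x\in Sxy$ so $Sx\sub Sxy$, invoke KW-stability to get $Sx=Sxy$, and conclude $xy\in Sx$ so $xy\leqL x$. The paper likewise defers \ref{S1} to \cite[Proposition~2.3.10]{Lallement1979}; your additional self-contained sketch for \ref{S1} is a welcome extra and is sound.
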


\pf
To prove \ref{S2}, suppose $S$ is KW-stable.  By duality, it suffices to demonstrate the first implication in~\eqref{eq:KW2}, so suppose $x,y\in S$ are such that $x\leqL xy$.  We must show that ${x\geq_{\L} xy}$.  This is clear if $x=xy$, so suppose otherwise.  Since $x\leqL xy$, we have $x\in Sxy$.  Then ${Sx\sub SSxy\sub Sxy}$, so in fact $Sx=Sxy$ by KW-stability.  Since $x\in Sxy=Sx$, it follows that $xy\in Sxy=Sx$, which gives $xy\leqL x$.  
\epf

Although KW-stability implies (KW$^1$-)stability, the converse does not hold in general.  Indeed, the following elegant counterexample was constructed by O'Carroll in \cite[Section 3]{OCarroll1969}:

\begin{eg}\label{eg:OC}
Writing $\N=\{0,1,2,\ldots\}$, define $A = -\N\cup(2\N+1)$ and $B=-\N$.  Also define $f,g\in\T_\Z$ (the full transformation semigroup on $\Z$) by 
\[
f: \begin{cases}
a\mapsto 0 &\text{for $a\in A$}\\
2x\mapsto -x &\text{for $x\geq1$}
\end{cases}
\AND
g: \begin{cases}
b\mapsto b+1 &\text{for $b\in B$}\\
2x\mapsto 2x+1 &\text{for $x\geq1$}\\
2x-1\mapsto 2x+1 &\text{for $x\geq1$.}
\end{cases}
\]
Since $g$ is injective on $A=\im(g)$, it follows that $g$ is a group element of~$\T_\Z$.  Let $G$ be the infinite cyclic group generated by $g$.  Also write $K$ for the kernel of $\T_\Z$, which consists of all constant mappings $\Z\to\Z$.  Then by \cite[Theorem 3.5]{OCarroll1969}, $S=G\cup f G\cup K$ is a (KW$^1$-)stable semigroup that is not KW-stable.
\end{eg}

Consider again the (modern) definition of stability, which says that for all $x,y\in S$,
\begin{equation}\label{eq:stab}
x\J xy \implies x\R xy \AND x\J yx \implies x\L yx.
\end{equation}
It follows quickly from the first of these implications that if $x$ and $z$ are two elements of some common $\J$-class of $S$, then $R_z\leqR R_x\implies R_z=R_x$ (if $R_z\leqR R_x$, then $z=xy$ for some $y\in S^1$, so $x\J xy$ and we apply \eqref{eq:stab}).  That is, if $S$ is right-stable, then for any $\J$-class $J$ of $S$, all $\R$-classes contained in $J$ are minimal in the $\leqR$-ordering on $\R$-classes contained in $J$.  This clearly implies the following condition:
\begin{itemize}
\item[$M_R^*$:] ~ For each $\J$-class $J$ of $S$, the set of all $\R$-classes contained in $J$ has a minimal element.
\end{itemize}
The reverse implication ($M_R^*$$\implies$right-stability) was proved in \cite[Lemma 2.2]{Munn1957}.  Of course left-stability is equivalent to condition $M_L^*$, defined dually in terms of $\L$-classes.  See also \cite{HM1979}, \cite[Section 6.6]{CPbook2} and \cite[Section 1.2]{Higgins1992} for further discussions of these and other minimality conditions.

Condition $M_R^*$ is weaker than the condition known as $M_R$.  The semigroup $S$ satisfies $M_R$ if every nonempty set of $\R$-classes of $S$ contains an element minimal in the $\leqR$ order, or equivalently that there are no infinite descending chains of $\R$-classes.  (Condition $M_L$ is defined dually with respect to $\L$-classes.)  Green's main extension of Theorem \ref{thm:DJ} to classes containing infinite semigroups is \cite[Theorem 8]{Green1951}, which states that ${\D}={\J}$ if $S$ satisfies $M_R$ and $M_L$ (or, in Green's notation, that $\mathfrak d=\mathfrak f$ if $S$ satisfies $\mathscr M_{\mathfrak r}$ and $\mathscr M_{\mathfrak l}$).  Since we have noted that the conjunction of $M_R^*$ and $M_L^*$ is equivalent to stability, we have the following well-known result (cf.~\cite[Theorem 6.45]{CPbook2}):

\begin{thm}
If $S$ satisfies $M_R^*$ and $M_L^*$, then ${\D}={\J}$.  
\end{thm}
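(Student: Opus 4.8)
The plan is to reduce the statement to material already assembled above: the hypotheses $M_R^*$ and $M_L^*$ are exactly the conditions of right- and left-stability, so the semigroup is stable, and stable semigroups satisfy ${\D}={\J}$ by statement \ref{it:4} (itself deduced from Theorem \ref{thm:stable}). So the proof should be a short assembly, and the only substantive input is borrowed.

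First I would record the equivalences. As observed in the paragraph preceding the statement, right-stability of $S$ implies $M_R^*$: if $z$ and $x$ lie in a common $\J$-class with $R_z\leqR R_x$, write $z=xy$ with $y\in S^1$, note that $x\J xy$, and apply the first implication in \eqref{eq:stab} to get $R_z=R_x$; hence every $\R$-class contained in a $\J$-class $J$ is $\leqR$-minimal among the $\R$-classes in $J$, which is $M_R^*$. The converse, $M_R^*\implies$ right-stability, is \cite[Lemma 2.2]{Munn1957}. Dually, $M_L^*$ is equivalent to left-stability. Therefore a semigroup satisfying both $M_R^*$ and $M_L^*$ is simultaneously left- and right-stable, that is, stable.

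It then remains to invoke ${\D}={\J}$ for stable semigroups. The cleanest way is to apply Theorem \ref{thm:stable} with $U=S$: here ${\Ld}={\L}$, ${\Rd}={\R}$, ${\Dd}={\D}$, and the preorder $\leqJd$ on $U=S$ is just $\leqJ$, all trivially; so the theorem asserts that if $y$ is stable and $x\J y$ (whence $x\leqJ y$), then $x\D y$. Since every element of the stable semigroup $S$ is stable, this yields ${\J}\sub{\D}$, and combining with the always-valid inclusion ${\D}\sub{\J}$ we obtain ${\D}={\J}$. (One could equally quote statement \ref{it:4}, or the corollary that a $\J$-class containing a stable element is a $\D$-class.) I do not anticipate a genuine obstacle: once the equivalence of $M_R^*$ with right-stability is accepted, the argument is pure bookkeeping, and that equivalence is precisely the half supplied by \cite{Munn1957}, the other half being immediate from \eqref{eq:stab}.
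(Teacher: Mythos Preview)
Your proposal is correct and mirrors the paper's own argument exactly: the paper derives the theorem by noting that the conjunction of $M_R^*$ and $M_L^*$ is equivalent to stability (via the paragraph invoking \eqref{eq:stab} and \cite[Lemma 2.2]{Munn1957}), and then appealing to the already-established fact that stable semigroups satisfy ${\D}={\J}$. Your expanded bookkeeping through Theorem \ref{thm:stable} with $U=S$ is precisely the route the paper has in mind.
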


On the other hand, there are no one-sided versions of the results discussed in the previous paragraph.  Indeed, if $S$ is a so-called \emph{Croisot-Teissier semigroup}, as defined in \cite[Section~8.2]{CPbook2}, and first studied in \cite{Croisot1954,Teissier1953}, then $S$ satisfies $M_R$ (and hence $M_R^*$), but not $M_L^*$ (and hence not~$M_L$), yet ${\D}\not={\J}$ in general.  This all follows from \cite[Theorem~8.11]{CPbook2} and \cite[Theorems~6,~11 and~12]{Levi1986}, cf.~\cite[Section~1]{HM1979}.

\subsection*{Acknowledgements}

We thank the referee for some valuable suggestions, especially for suggesting Proposition \ref{prop:simple} and strengthening our previous version of Proposition \ref{prop:embed}.

\newpage

\footnotesize
\def\bibspacing{-1.1pt}
\bibliography{biblio}
\bibliographystyle{abbrv}

\end{document}